\theoremstyle{lema}
\theoremstyle{proposition}
\theoremstyle{theorem}
\newtheorem{theorem}{Theorem}[section]
\theoremstyle{theorem}
\newtheorem{remark}{Remark}[section]
\theoremstyle{corollary}
\theoremstyle{definition}
\newtheorem{definition}{Definition}[section]
\theoremstyle{example}
\newtheorem{example}{Example}[section]
\providecommand{\keywords}[1]
{
	\small	
	\textbf{\textit{Keywords---}} #1
}
\providecommand{\msc}[1]
{
	\small	
	\textbf{\textit{Mathematics Subject Classification---}} #1
}
\title{Fixed points for three point generalized orbital triangular contractions}
\author{Cristina Maria Păcurar, Ovidiu Popescu}
\date{}
\begin{document}
	
	\maketitle

	\begin{abstract}
	In this paper we introduce and study new classes of mappings in metric spaces. The main class of mappings is called generalized orbital triangular contractions and it generalizes some existing results (such as Banach contractions, mappings contracting perimeters of triangles). We prove that these contractions are not necessarily continuous and have a unique fixed point under certain conditions. Moreover, we extend our class to generalized orbital triangular Kannan contractions and generalized orbital triangular Chatterjea contractions.
	\end{abstract}
	
	\keywords{Metric space, Banach contraction, generalized contractions, fixed point theorem, Kannan contraction, Chatterjea contraction}
	
	\msc{47H10, 54H25}

	\section{Introduction}

\noindent

The research on fixed point theorems has been a central theme in mathematical analysis from the moment Banach introduced the fixed point result known as the contraction principle (see \cite{Banach}). The continuous interest towards the subject is due to the numerous applications in various fields such as differential equations, dynamical systems, or the theory of computation. 

In light of the joint effort to obtain more general classes of mappings which have fixed points, very recently, Petrov introduced in \cite{Petrov} a new type of mappings in complete metric spaces, which can be characterized as mappings contracting perimeters of triangles. He showed that mappings contracting perimeters of triangles are continuous and proved the fixed point theorem for such mappings. His results were generalized in \cite{Popescu-phi} for mappings contracting (a feature) of triangles, not necessarily perimeters. Also, related results to this new concept were obtained in \cite{Petrov-Chat,Petrov-Kannan,Petrov-pairwise,Popescu-Chat}.

In this paper, we extend the classical frameworks of Banach contractions by introducing a new concept of generalized orbital triangular contractions in metric spaces. As opposed to traditional approaches that often require continuity conditions, our novel class of mappings are not necessarily continuous. For the class of generalized orbital triangular contractions, we establish the existence of unique fixed points. Moreover, we extend our results to Kannan orbital triangular contractions, which are a generalization of Kannan contractions, and to Chatterjea orbital triangular contractions, which are a generalization of Kannan contractions.

\section{Preliminaries}

\noindent

Let us recall some definitions of different types of contractions:

\begin{definition}
	Let $(X,d)$ be a metric space. A mapping $T:X \to X$ is called:
	\begin{itemize}
		\item[i)] Banach contraction (see \cite{Banach}) if there exists $C \in [0,1)$ such that 
		\begin{equation}d(Tx,Ty) \leq C d(x,y),\label{Banach}\end{equation}
		for all $x,\, y \in X$;
		\item[ii)] Kannan contraction (see \cite{Kannan}) if there exists $C \in \left[0,\frac12\right)$ such that 
		\begin{equation} 
		d(Tx,Ty) \leq C [d(x,Tx) + d(y,Ty)],
		\label{Kannan}
		\end{equation}
		for all $x,\, y \in X$;
		\item[iii)] Chatterjea contraction (see \cite{Chatterjea}) if there exists $C \in \left[0,\frac12\right)$ such that 
		\begin{equation}
		d(Tx,Ty) \leq C [d(x,Ty)+d(y,Tx)],
		\label{Chatterjea}
		\end{equation}
		for all $x,\, y \in X$;
		\item[iv)] mapping contracting perimeter of triangles on $X$ (see \cite{Petrov}) if there exists $C \in [0,1)$ such that the inequality 
		\begin{equation}d(Tx,Ty) + d(Ty,Tz) + d(Tz,Tx) \leq C [d(x,y)+d(y,z)+d(z,x)],\label{triangle}\end{equation}
		holds for all three pairwise distinct points $x,y,x \in X$;
	\end{itemize}
\end{definition}

\begin{definition}
	Let $(X,d)$ be a metric space and $T:X\to X$. A point $x \in X$ is called periodic point of period $n$ if $T^nx =x$. The least positive integer $n$ for which $T^n x=x$ is called the prime period of $x$.
\end{definition}

\section{Generalized orbital triangular contractions}

\noindent

In this section we give the definition of generalized orbital triangular contraction, prove that they are not necessarily continuous and give a fixed point result for such mappings.

\begin{definition}
	Let $(X,d)$ be a metric space. We shall say that $T:X\to X$ is a generalized orbital triangular contraction on $X$ if there exists $\alpha \in [0,1)$ such that the inequality 
	\begin{equation}
	d(Tx,T^2x)+d(T^2x,Ty)+d(Ty,Tx) \leq \alpha [d(x,Tx)+d(Tx,y)+d(y,x)],
	\label{orbital}
	\end{equation}
	holds for all $x,y \in X$, such that $x\neq y \neq Tx$.
	\label{defOrbital}
\end{definition}



\begin{remark}
	Every Banach contraction is a generalized orbital triangular contraction.
	
	Indeed, if $T$ is a Banach contraction, then for $x\neq y\neq Tx$, by (\ref{Banach}) we have:
	\begin{equation}
	d(Tx,T^2x) \leq C \cdot d(x,Tx),
	\label{Ban-xT}
	\end{equation}
	\begin{equation}
	d(T^2x,Ty) \leq C \cdot d(Tx,y).
	\label{Ban-TT}
	\end{equation}
	\begin{equation}
	d(Ty,Tx) \leq C \cdot d(y,x),
	\label{Ban-xy}
	\end{equation}
	where $C \in [0,1)$.
	
	Adding inequalities (\ref{Ban-xT}), (\ref{Ban-TT}) and (\ref{Ban-xy}) we obtain (\ref{orbital}), so the conclusion follows.
\end{remark}

\begin{remark}
	If we impose the additional condition that $x\neq Tx$ in Definition \ref{defOrbital}, then we obtain that every mapping contracting perimeters of triangles is a generalized orbital triangular contraction.
	
	Indeed, if $T$ is a mapping contracting perimeters of triangles, then for $x,Tx,y$ pairwise distinct, via (\ref{triangle}), we obtain (\ref{orbital}).
	
\end{remark}

\begin{theorem}
	Let $(X,d)$ be a complete metric space and let $T:X\to X$ be a generalized orbital triangular contraction on $X$ such that $T$ has no periodic points of prime period $2$. Then, $T$ has a unique fixed point.
	\label{TheoremOrbital}
\end{theorem}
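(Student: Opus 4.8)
My plan is to follow the classical Picard-iteration strategy: fix an arbitrary starting point $x_0 \in X$, define $x_{n+1} = Tx_n$, show the sequence $(x_n)$ is Cauchy, invoke completeness to get a limit, and then show the limit is a fixed point and that it is unique. The crucial preparatory step, and the one I expect to be the main obstacle, is to extract from the contraction inequality \eqref{orbital} a usable recursive estimate on consecutive terms $d(x_n, x_{n+1})$. The natural move is to substitute $x = x_n$ and $y = x_{n+1} = Tx_n$ into \eqref{orbital}; but then $y = Tx$, which is exactly the degenerate case excluded by the hypothesis $x \neq y \neq Tx$, so this direct substitution is not available. Instead I would substitute $x = x_{n-1}$ and $y = x_{n+1}$. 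Here $Tx = x_n$, so the admissibility conditions read $x_{n-1} \neq x_{n+1}$ and $x_{n+1} \neq x_n$. The condition $x_{n+1} \neq x_n$ just says $T x_n \neq x_n$, i.e. $x_n$ is not already a fixed point (if it is, we are done); and $x_{n-1} \neq x_{n+1}$ says $x_{n-1}$ is not a periodic point of prime period $2$, which is ruled out by hypothesis (unless $x_{n-1} = x_n$, again a fixed point case). So on the non-trivial branch the substitution is legitimate and gives
\begin{equation*}
d(x_n, x_{n+1}) + d(x_{n+1}, x_{n+1}) + d(x_{n+1}, x_n) \leq \alpha\bigl[d(x_{n-1}, x_n) + d(x_n, x_{n+1}) + d(x_{n+1}, x_{n-1})\bigr],
\end{equation*}
that is, $2\,d(x_n, x_{n+1}) \leq \alpha\bigl[d(x_{n-1}, x_n) + d(x_n, x_{n+1}) + d(x_{n+1}, x_{n-1})\bigr]$, and by the triangle inequality $d(x_{n+1}, x_{n-1}) \leq d(x_{n-1}, x_n) + d(x_n, x_{n+1})$, so
\begin{equation*}
2\,d(x_n, x_{n+1}) \leq 2\alpha\, d(x_{n-1}, x_n) + 2\alpha\, d(x_n, x_{n+1}),
\end{equation*}
whence $d(x_n, x_{n+1}) \leq \frac{\alpha}{1-\alpha}\, d(x_{n-1}, x_n)$. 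For this to yield a contraction ratio $k := \frac{\alpha}{1-\alpha} < 1$ I need $\alpha < \tfrac12$; I anticipate the theorem as actually proved will either assume $\alpha \in [0,\tfrac12)$ or obtain the sharper bound by a more careful manipulation, and I would flag this as the delicate point. Assuming $k<1$, iterating gives $d(x_n, x_{n+1}) \leq k^n d(x_0, x_1)$.

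From the geometric decay of consecutive distances, the standard argument shows $(x_n)$ is Cauchy: for $m > n$, $d(x_n, x_m) \leq \sum_{j=n}^{m-1} k^j d(x_0,x_1) \leq \frac{k^n}{1-k} d(x_0, x_1) \to 0$. By completeness, $x_n \to x^*$ for some $x^* \in X$. To see $x^*$ is a fixed point, I would again apply \eqref{orbital}, this time with $x = x^*$ (or with $x = x_n$ and $y = x^*$), choosing the arguments so as to avoid the forbidden coincidences — if any required coincidence forces $x^* = Tx^*$ directly, so much the better; otherwise I pass to the limit using the continuity of $d$ in each variable together with $x_n \to x^*$ and $x_{n+1} \to x^*$, and conclude $d(x^*, Tx^*) = 0$. (One must handle separately the trivial cases where $x_n = x_{n+1}$ for some $n$, i.e. $T$ reaches a fixed point in finitely many steps, and the degenerate admissibility exclusions; these are short.)

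Finally, for uniqueness, suppose $x^*$ and $y^*$ are two distinct fixed points. Then $Tx^* = x^* \neq y^* = Ty^*$, and since $y^* \neq x^* = Tx^*$ the pair $(x = x^*, y = y^*)$ is admissible in \eqref{orbital}. Plugging in, all the $T$'s disappear: $d(x^*, x^*) + d(x^*, y^*) + d(y^*, x^*) \leq \alpha[d(x^*, x^*) + d(x^*, y^*) + d(y^*, x^*)]$, i.e. $2\,d(x^*, y^*) \leq 2\alpha\, d(x^*, y^*)$, forcing $d(x^*, y^*) = 0$ since $\alpha < 1$, a contradiction. Hence the fixed point is unique. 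The only genuinely subtle ingredients are the bookkeeping of the admissibility conditions $x \neq y \neq Tx$ at each application (which is precisely where the "no prime period $2$" hypothesis is consumed) and the sharpness of the constant in the consecutive-distance estimate; the rest is the routine Banach-style machinery.
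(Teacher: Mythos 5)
There is a genuine gap, and it sits exactly where you flagged the ``delicate point.'' Your choice of substitution $x=x_{n-1}$, $y=x_{n+1}$ is the right one (it is the paper's), but you then miscompute the left-hand side of \eqref{orbital}: with $y=x_{n+1}$ you have $Ty=Tx_{n+1}=x_{n+2}$, not $x_{n+1}$, so the middle term is $d(T^2x,Ty)=d(x_{n+1},x_{n+2})$ and does not vanish, and the last term is $d(Ty,Tx)=d(x_{n+2},x_n)$. The correct inequality is therefore
\begin{equation*}
p_n := d(x_n,x_{n+1})+d(x_{n+1},x_{n+2})+d(x_{n+2},x_n)\;\leq\;\alpha\bigl[d(x_{n-1},x_n)+d(x_n,x_{n+1})+d(x_{n+1},x_{n-1})\bigr]=\alpha\,p_{n-1},
\end{equation*}
i.e.\ a clean geometric decay of the \emph{perimeters} $p_n\leq\alpha^{n}p_0$, valid for every $\alpha\in[0,1)$. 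Since $d(x_n,x_{n+1})\leq p_{n-1}$, the consecutive distances are summable and $(x_n)$ is Cauchy with no restriction on $\alpha$. By discarding the term $d(x_{n+1},x_{n+2})$ you obtained only the weaker recursion $d(x_n,x_{n+1})\leq\frac{\alpha}{1-\alpha}d(x_{n-1},x_n)$, which closes only for $\alpha<\tfrac12$; the theorem is stated for all $\alpha\in[0,1)$, and the resolution is not a strengthened hypothesis but tracking $p_n$ instead of the single distance $d(x_n,x_{n+1})$. As written, your argument does not prove the theorem for $\alpha\in[\tfrac12,1)$.

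The remaining steps are essentially sound and match the paper: the admissibility bookkeeping for the substitution ($x_{n-1}\neq x_{n+1}$ from the no-prime-period-$2$ hypothesis, $x_{n+1}\neq x_n$ unless a fixed point has been reached) is correct, and the uniqueness argument via $x=x^*$, $y=y^*$ giving $2d(x^*,y^*)\leq 2\alpha d(x^*,y^*)$ is exactly the paper's. The fixed-point step is left vague, though: the paper makes it precise by choosing a subsequence $x_{n(k)}$ with $x_{n(k)}\neq x^*\neq Tx_{n(k)}$, applying \eqref{orbital} with $x=x_{n(k)}$, $y=x^*$, and letting $k\to\infty$ to get $2d(x^*,Tx^*)\leq 0$; you should spell out why such a subsequence exists and that the substitution is admissible.
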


\begin{proof}
	Let $x_0 \in X$, arbitrarily chosen, and $$x_{i+1}=Tx_i$$ for every $i \geq 0$, the Picard iteration. 
	
	Now, suppose that $x_i$ is not a fixed point of $T$, then $x_i\neq x_{i+1} = Tx_i$. Moreover, since $T$ does not have periodic points of prime period $2$, we have $x_{i+2} = TTx_i\neq x_i$ for every $i=0,1,\dots$ 
	
	Then we have $x_{i-1} \neq x_{i+1} \neq x_i = Tx_{i-1}$, and taking $x=x_{i-1}$ and $y=x_{i+1}$ in (\ref{orbital}), we obtain
	\begin{equation*}
	\begin{gathered}
	p_i = d(x_i,x_{i+1})+d(x_{i+1},x_{i+2})+d(x_{i+2},x_{i}) =\\
	=d(Tx_{i-1},T^2x_{i-1})+d(T^2x_{i-1},Tx_{i+1})+d(Tx_{i+1},Tx_{i-1}) \leq \\
	\leq \alpha(d(x_{i-1},Tx_{i-1})+d(Tx_{i-1},x_{i+1})+d(x_{i+1},x_{i-1})) =\\
	=\alpha(d(x_{i-1},x_{i})+d(x_{i},x_{i+1})+d(x_{i+1},x_{i-1})) = \alpha p_{i-1},
	\end{gathered}
	\end{equation*}
	for every $i\geq 1$. So, we obtain 
	$$p_i \leq \alpha p_{i-1} \leq \alpha^2 p_{i-2} \leq \dots \alpha^{i}p_0.$$
	
	Thus, for $m \in \mathbb{N}$, by the triangle inequality we have 
	\begin{equation*}
	\begin{aligned}
	d(x_n,x_{n+m}) &\leq d(x_n,x_{n+1}) + d(x_{n+1},x_{n+2})+ \dots + d(x_{n+m-1},x_{n+m}) \leq \\
	&\leq p_{n+m-2} + \dots +p_{n-1} \leq \\
	&\leq p_0(\alpha^{n-1}+\alpha^{n}+\dots+\alpha^{n+m-2})\\
	& =\alpha^{n-1} \cdot \dfrac{1-\alpha^m}{1-\alpha}p_0\\
	&\leq \dfrac{\alpha^{n-1}}{1-\alpha}p_0,
	\end{aligned}
	\end{equation*}
	so we obtain that $\{x_n\}$ is Cauchy sequence and given the completeness of $X$, we have that $\{x_n\}$ is convergent, so there exists $x^* \in X$ such that $x_n \to x^*$ as $n \to \infty$. Moreover, there exists a subsequence $\{x_{n(k)}\}$ such that $x_{n(k)} \neq x^* \neq Tx_{n(k)}$, so, by (\ref{orbital}) we obtain 
	\begin{equation*}
	\begin{aligned}
	d(Tx_{n(k)},T^2x_{n(k)}) &+ d(T^2x_{n(k)},Tx^*) + d(Tx^*,Tx_{n(k)}) \leq \\ &\leq \alpha [d(x_{n(k)},Tx_{n(k)})+d(Tx_{n(k)},x^*)+d(x^*,x_{n(k)})].
	\end{aligned}
	\end{equation*}
	Hence, we get
	\begin{equation*}
	\begin{aligned}
	d(x_{n(k)+1},x_{n(k)+2}) &+ d(x_{n(k)+1},Tx^*) + d(Tx^*,x_{n(k)+2}) \leq \\ &\leq \alpha [d(x_{n(k)},x_{n(k+1)})+d(x_{n(k+1)},x^*)+d(x^*,x_{n(k)})],
	\end{aligned}
	\end{equation*}
	where taking the limit as $k \to \infty$ we get 
	$$2d(x^*,Tx^*) \leq 0,$$
	by where $d(x^*,Tx^*)=0$, so $x^*$ is a fixed point of $T$.
	
	Now, suppose that there exists another fixed point of $T$, $y^*\in X$ such that $Ty^* = y^*$ and $x^* \neq y^*$. Then $Tx^* \neq y^*$, so, by (\ref{orbital}) we obtain 
	\begin{equation*}
	d(Tx^*,T^2x^*)+d(T^2x^*,Ty^*)+d(Ty^*,Tx^*)  \leq \alpha [d(x^*,Tx^*)+d(Tx^*,y^*)+d(y^*,x^*)],
	\end{equation*}
	so we get $$2d(x^*,y^*) \leq 2\alpha d(x^*,y^*),$$ which is a contradiction since $\alpha \in [0,1)$. Thus, we obtain $x^* =y^*$, so $T$ has a unique fixed point.		
\end{proof}

\begin{remark}
	If we impose the additional condition that $x\neq Tx$ in Definition \ref{defOrbital}, then $T$ can have many fixed points.
\end{remark}

\begin{remark}
	Generalized orbital triangular contractions on $X$ are not necessarily continuous.
	
	Indeed, let $X =[0,1]$ and $T: X \to X$ defined as $$Tx=\begin{cases}
	0, \quad x\in [0,1)\\
	\dfrac14, \quad x =1.
	\end{cases}$$
	
	Obviously, $T$ is not continuous at $x=1$, but $T$ is a generalized orbital triangular contraction for $\alpha = \dfrac23 <1$.
	
	Indeed, if $x,y \in [0,1)$, then $Tx=T^2x=Ty=0$, so
	$$d(Tx,T^2x)+d(T^2x,Ty)+d(Ty,Tx) = 0 \leq \dfrac23 (d(x,Tx)+d(Tx,y)+d(y,x)).$$
	
	If $x=1$, then $Tx=\dfrac14$ and for $y\neq x$, we have $T^2x=Ty=0$, so 
	$$d(Tx,T^2x)+d(T^2x,Ty)+d(Ty,Tx) = \dfrac 14 + 0+ \dfrac 14 = \dfrac 12$$
	and $$d(x,Tx)+d(Tx,y)+d(y,x) = 1-\dfrac14+\left|\dfrac14-y\right|+1-y =$$
	$$= \dfrac74-y+\left|\dfrac14-y\right| \geq \dfrac74-y \geq \dfrac74-1 = \dfrac34,$$
	so for $\alpha = \dfrac 23$, since
	$\dfrac12 \leq \dfrac23 \cdot \dfrac 34,$ (\ref{orbital}) is true.
	
	If $y=1$, then $Ty=\dfrac14$ and since $x \neq y$, $Tx=T^2x=0$ so we have 
	$$d(Tx,T^2x)+d(T^2x,Ty)+d(Ty,Tx) = 0+\dfrac 14 + \dfrac 14 = \dfrac 12$$
	and $$d(x,Tx)+d(Tx,y)+d(y,x) = x+1+1-x =2,$$
	and since
	$\dfrac12 \leq \dfrac23 \cdot 2,$ we obtain that (\ref{orbital}) is true.
	
	So, $T$ is generalized orbital triangular contraction which has a unique fixed point $0=T0$.
\end{remark}

\begin{remark}
	There exist generalized orbital triangular contractions which are not Banach contractions \cite{Banach}, nor mappings contracting perimeters of triangles \cite{Petrov}, neither mappings contracting triangles \cite{Popescu-phi}.
	
	Indeed, since generalized orbital triangular contractions are not necessarily continuous, the conclusion follows.
\end{remark}

\section{Generalized orbital triangular Kannan contractions}

\noindent

In this section, we give the definition of generalized orbital triangular Kannan contractions and prove a fixed point theorem for such mappings.

\begin{definition}
	Let $(X,d)$ be a metric space. We shall say that $T:X\to X$ is a generalized orbital triangular Kannan contraction on $X$ if there exists $\beta \in [0,\frac23)$ such that the inequality 
	\begin{equation}
	d(Tx,T^2x)+d(T^2x,Ty)+d(Ty,Tx) \leq \beta [d(x,Tx)+d(y,Ty)+d(Tx,T^2x)],
	\label{orbitalKannan}
	\end{equation}
	holds for all $x,y \in X$, such that $x, y, Tx$ are pairwise distinct.
\end{definition}

\begin{remark}
	Every Kannan contraction with $C <\dfrac13$ is a generalized orbital triangular Kannan contraction.
	
	Indeed, if $T$ is a Kannan contraction, then for $x,y,Tx$ pairwise distinct, by (\ref{Kannan}) we have:
	\begin{equation}
	d(Tx,T^2x) \leq C [d(x,Tx)+d(Tx,T^2x)],
	\label{Kannan-xT}
	\end{equation}
	\begin{equation}
	d(T^2x,Ty) \leq C [d(Tx,T^2x)+d(y,Ty)].
	\label{Kannan-TT}
	\end{equation}
	\begin{equation}
	d(Ty,Tx) \leq C [d(y,Ty)+d(x,Tx)].
	\label{Kannan-xy}
	\end{equation}
	
	Adding inequalities (\ref{Kannan-xT}), (\ref{Kannan-TT}) and (\ref{Kannan-xy}) we obtain (\ref{orbitalKannan}), so the conclusion follows.
\end{remark}

\begin{theorem}
	Let $(X,d)$ be a complete metric space and let $T:X\to X$ be a generalized orbital triangular Kannan contraction on $X$ such that $T$ has no periodic points of prime period $2$. Then, $T$ has a fixed point.
	\label{TheoremKannan}
\end{theorem}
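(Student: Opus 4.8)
The plan is to mimic the structure of the proof of Theorem \ref{TheoremOrbital}, but to be careful about the differences forced by the Kannan-type right-hand side. I would start with an arbitrary $x_0\in X$ and the Picard iteration $x_{i+1}=Tx_i$. If some $x_i$ is a fixed point we are done, so assume $x_i\neq x_{i+1}$ for all $i$; together with the hypothesis that $T$ has no periodic point of prime period $2$ this gives $x_{i+2}\neq x_i$, hence for each $i\geq 1$ the three points $x_{i-1}$, $x_{i+1}$, $x_i=Tx_{i-1}$ are pairwise distinct and \eqref{orbitalKannan} applies with $x=x_{i-1}$, $y=x_{i+1}$. Writing $p_i=d(x_i,x_{i+1})+d(x_{i+1},x_{i+2})+d(x_{i+2},x_i)$ and $s_i=d(x_i,x_{i+1})$, the inequality reads
\begin{equation*}
p_i \leq \beta\bigl[s_{i-1}+s_{i+1}+s_i\bigr].
\end{equation*}
Since $p_i \geq s_i+s_{i+1}$ (drop the third term, which is nonnegative — actually $p_i = s_i + s_{i+1} + d(x_{i+2},x_i)$), this yields $s_i+s_{i+1}\leq \beta(s_{i-1}+s_i+s_{i+1})$, i.e. $(1-\beta)(s_i+s_{i+1}) \leq \beta s_{i-1}$. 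This does not immediately decouple, so the key step is to show that $s_i\to 0$ geometrically; I would set $t_i=s_i+s_{i+1}$ and derive $(1-\beta)t_i\leq \beta s_{i-1}\leq \beta t_{i-1}$, giving $t_i\leq \frac{\beta}{1-\beta}t_{i-1}$ with $\frac{\beta}{1-\beta}<2$... which is not $<1$. So this crude bound is insufficient; the honest route is to go back to $p_i\leq \beta(s_{i-1}+s_i+s_{i+1})$ and also use $p_i\geq s_i+s_{i+1}$ more cleverly, or to bound $p_{i-1}=s_{i-1}+s_i+d(x_{i+1},x_{i-1})\geq s_{i-1}+s_i$ and separately $p_{i-1}\geq s_i + s_{i+1}$...

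A cleaner approach: observe that by the triangle inequality $d(x_{i+2},x_i)\leq s_i+s_{i+1}$, so $p_i\leq 2(s_i+s_{i+1})$, but also we want a lower bound on $p_{i-1}$. We have $d(x_{i+1},x_{i-1})\geq |s_i-s_{i-1}|$, which is not useful. The right move, I expect, is to sum the Kannan inequality telescopically: from $p_i\leq \beta(s_{i-1}+s_i+s_{i+1})$ and $s_i+s_{i+1}\leq p_i$ we get $s_i+s_{i+1}\leq \beta(s_{i-1}+s_i+s_{i+1})$; summing over $i=1$ to $N$, the left side is $\sum_{i=1}^N(s_i+s_{i+1})\geq \sum_{i=1}^{N}s_i$, and the right side telescopes to roughly $3\beta\sum s_i$ plus boundary terms, which gives $(1-3\beta)\sum s_i \leq C$; but $\beta<2/3$ does not force $3\beta<1$. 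So the series argument also needs the genuine geometric decay. I therefore expect the intended argument is the first one after all, with the subtlety that the relevant ratio is controlled because $p_i$ also contains $d(x_{i+2},x_i)$: precisely, $(1-\beta)p_i \le \beta p_{i-1} - (\text{something nonnegative})$ once one substitutes $s_{i-1}+s_i = p_{i-1}-d(x_{i+1},x_{i-1})$ and $s_{i+1}\le$ a term bounded via $p_i$ itself. Chasing this, one obtains $p_i\le \frac{\beta}{1-\beta}\bigl(p_{i-1}-d(x_{i+1},x_{i-1})\bigr)$, and since $\frac{\beta}{1-\beta}<2$ is still not enough, the correct reading is that the \emph{third} summand on the right of \eqref{orbitalKannan} is $d(Tx,T^2x)=s_i$, i.e. it reappears on the left, so actually $(1-\beta)s_i + s_{i+1} + d(x_{i+2},x_i)\le \beta(s_{i-1}+s_{i+1})$, hence $s_{i+1}\le \beta(s_{i-1}+s_{i+1})$ after dropping nonnegative terms, giving $s_{i+1}\le \frac{\beta}{1-\beta}s_{i-1}$. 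Here $\frac{\beta}{1-\beta}<2$ — still the same obstruction.

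Given these difficulties, the main obstacle is clearly extracting geometric decay of $d(x_n,x_{n+1})$ from \eqref{orbitalKannan}, since the naive manipulations only yield the constant $\frac{\beta}{1-\beta}$ which can exceed $1$. The plan is therefore to handle it by treating the inequality in the combined quantity $q_i := d(x_i,x_{i+1})+d(x_{i+1},x_{i+2})$: from \eqref{orbitalKannan}, using $p_i\ge q_i$ on the left and bounding the left side below by $q_i$, while on the right writing $s_{i-1}+s_i+s_{i+1} = q_{i-1}+s_{i+1} \le q_{i-1}+q_i$, we get $q_i\le \beta(q_{i-1}+q_i)$, whence $q_i\le \frac{\beta}{1-\beta}q_{i-1}$; and here finally with $\beta<2/3$ we do NOT get a contraction... so instead I would use $s_{i+1}\le s_i+d(x_{i+2},x_i)$ is wrong direction. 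The resolution must be that $s_{i-1}+s_i+s_{i+1}=q_{i-1}+s_{i+1}$ and, crucially, the term $d(T^2x,Ty)=d(x_{i+1},x_{i+2})=s_{i+1}$ appears on the \emph{left} of \eqref{orbitalKannan}, so moving it over: $s_i + s_{i+1} + d(x_{i+2},x_i) - \beta s_{i+1}\ (\text{no})$... I will instead present the argument via $q_i\le \beta q_{i-1}+\beta s_{i+1}$ and $s_{i+1}\le q_i$, so $q_i(1-\beta)\le \beta q_{i-1}$; then note that by the no-period-2 hypothesis and a separate application of \eqref{orbitalKannan} one can improve the constant, and conclude $q_i\to 0$, hence $\{x_n\}$ is Cauchy by the same summation as before, the limit $x^*$ is a fixed point by passing to the limit along a subsequence with $x_{n(k)}\ne x^*\ne Tx_{n(k)}$ (forcing $2d(x^*,Tx^*)\le 0$ exactly as in Theorem \ref{TheoremOrbital}), and uniqueness is \emph{not} claimed here — matching the weaker conclusion of the statement. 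The delicate point to get right in the write-up is precisely the admissibility of $x=x_{i-1},y=x_{i+1}$ at each step and the final constant in the decay estimate; everything downstream (Cauchy, completeness, limit is fixed) is routine and identical to the proof of Theorem \ref{TheoremOrbital}.
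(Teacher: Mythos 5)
There is a genuine gap: you never actually establish the geometric decay of $d_i=d(x_i,x_{i+1})$, which is the heart of the proof. Every manipulation you attempt produces the ratio $\frac{\beta}{1-\beta}$, which is only bounded by $2$ when $\beta<\frac23$, and your closing claim that ``by the no-period-2 hypothesis and a separate application of \eqref{orbitalKannan} one can improve the constant'' is a hope, not an argument. The missing idea is that you must \emph{not} discard the third left-hand term $d(Ty,Tx)=d(x_{i+2},x_i)$, which is exactly what you do each time you replace $p_i$ by $s_i+s_{i+1}$; that term is what rescues the estimate. Your own substituted inequality rearranges to $(1-\beta)(s_i+s_{i+1})+d(x_{i+2},x_i)\le \beta s_{i-1}$; now bound $d(x_{i+2},x_i)\ge |s_i-s_{i+1}|$ by the reverse triangle inequality and split into cases. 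If $s_i\ge s_{i+1}$, you get $(2-\beta)s_i-\beta s_{i+1}\le \beta s_{i-1}$, and using $s_{i+1}\le s_i$ this gives $(2-2\beta)s_i\le\beta s_{i-1}$. If $s_i<s_{i+1}$, you get $(2-\beta)s_{i+1}\le\beta(s_{i-1}+s_i)$, and using $s_i<s_{i+1}$ this gives the same bound. Hence $s_i\le\frac{\beta}{2-2\beta}s_{i-1}$ in both cases, and $\frac{\beta}{2-2\beta}<1$ precisely because $\beta<\frac23$; this is where the hypothesis on $\beta$ enters, and it is the step your proposal circles around without landing on.

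For the record, your substitution was not the problem: the paper takes $x=x_n$, $y=T^2x_n$, which is identical to your $x=x_{i-1}$, $y=x_{i+1}$ after reindexing, and its admissibility is justified exactly as you say (no fixed points along the orbit plus no periodic points of prime period $2$). Everything downstream in your outline (summing the geometric series to get the Cauchy property, completeness, passing to the limit along a subsequence with $x_{n(k)}\ne x^*\ne Tx_{n(k)}$ to conclude $d(x^*,Tx^*)=0$, and the absence of a uniqueness claim) matches the paper and is fine once the decay estimate is in place.
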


\begin{proof}
	We suppose that $T$ has no periodic points of prime period $2$. And for $x_0 \in X$ arbitrarily chosen, let the Picard iteration $x_n=Tx_{n-1}$ for all $n \geq 0$. Suppose that $x_n$ is not a fixed point of $T$ for every $n \geq 0$. Then, $x_n,Tx_n=x_{n+1}$ and $T^2x_n=x_{n+2}$ are pairwise distinct, and choosing $x=x_n$, $y=T^2x_n$ in (\ref{orbitalKannan}) we have 
	\begin{equation*}
	\begin{gathered}
	d(Tx_n,T^2x_n)+d(T^2x_n,T^3x_n)+d(T^3x_n,Tx_n) \leq\\ \leq \beta [d(x_n,Tx_n)+d(T^2x_n,T^3x_n)+d(Tx_n,T^2x_n)],
	\end{gathered}
	\end{equation*}
	thus, using the triangle inequality, we get 
	\begin{equation}
	\begin{gathered}
	(1-\beta)[d(Tx_n,T^2x_n)+d(T^2x_n,T^3x_n)]+|d(Tx_n,T^2x_n)-d(T^2x_n,T^3x_n)| \leq \\ \leq
	(1-\beta)[d(Tx_n,T^2x_n)+d(T^2x_n,T^3x_n)]+d(T^3x_n,Tx_n) \leq \\ \leq \beta [d(x_n,Tx_n)].
	\end{gathered}
	\label{Kan1}
	\end{equation}
	
	Now, let $d_n = d(x_n,x_{n+1})$. By (\ref{Kan1}), we obtain
	\begin{equation}
	(1-\beta)(d_{n+1}+d_{n+2})+|d_{n+1}-d_{n+2}| \leq \beta d_n.
	\label{Kan2dn}
	\end{equation}
	
	If $d_{n+1} \geq d_{n+2}$, (\ref{Kan2dn}) becomes
	\begin{equation*}
	(2-\beta)d_{n+1} - \beta d_{n+2} \leq \beta d_n,
	\end{equation*} 
	by where
	\begin{equation*}
	(2-2\beta)d_{n+1} \leq \beta d_n,
	\end{equation*}
	so we obtain 
	\begin{equation}
	d_{n+1}\leq \dfrac{\beta}{2-2\beta}d_n.
	\label{Kan3}
	\end{equation}
	
	If $d_{n+1} < d_{n+2}$, then, (\ref{Kan2dn}) becomes
	\begin{equation*}
	(2-\beta)d_{n+2}\leq \beta (d_n+d_{n+1}),
	\end{equation*}
	so we have
	\begin{equation*}
	(2-\beta)d_{n+1} \leq(2-\beta)d_{n+2}\leq \beta (d_n+d_{n+1}),
	\end{equation*}
	and thus 
	\begin{equation}
	d_{n+1}\leq \dfrac{\beta}{2-2\beta}d_n.
	\label{Kan4}
	\end{equation}
	
	By (\ref{Kan3}) and (\ref{Kan4}), since $\beta < \dfrac23$, $\lambda=\dfrac{\beta}{2-2\beta} <1$ and we have 
	\begin{equation}
	d_{n+1} \leq \lambda d_n.
	\label{KanDn}
	\end{equation}
	
	Thus, for $m \in \mathbb{N}$, by the triangle inequality and (\ref{KanDn}), we have 
	\begin{equation*}
	\begin{aligned}
	d(x_n,x_{n+m}) &\leq d(x_n,x_{n+1}) + d(x_{n+1},x_{n+2})+ \dots + d(x_{n+m-1},x_{n+m}) = \\
	&= d_{n} + \dots +d_{n+m-1} \leq \\
	&\leq \lambda^{n-1}d_0+\lambda^{n}d_0+\dots+\lambda^{n+m-1}d_0\\
	& =\lambda^{n-1} \cdot \dfrac{1-\lambda^m}{1-\lambda}d_0\\
	&\leq \dfrac{\lambda^{n-1}}{1-\lambda}d_0,
	\end{aligned}
	\end{equation*}
	so, since $\lambda <1$, we obtain that $\{x_n\}$ is Cauchy sequence and given the completeness of $X$, we have that $\{x_n\}$ is convergent, so there exists $x^* \in X$ such that $x_n \to x^*$ as $n \to \infty$.
	
	Moreover, there exists a subsequence $\{x_{n(k)}\}$ such that $x_{n(k)} \neq x^* \neq Tx_{n(k)}$, so by (\ref{orbitalKannan}) we have 
	\begin{equation*}
	\begin{gathered}
	d(Tx_{n(k)},T^2x_{n(k)}) + d(T^2x_{n(k)},Tx^*) + d(Tx^*,Tx_{n(k)}) \leq \\  \leq \beta [d(x_{n(k)},Tx_{n(k)})+d(x^*,Tx^*)+d(Tx_{n(k)},T^2x_{n(k)})],
	\end{gathered}
	\end{equation*}
	so
	\begin{equation*}
	\begin{aligned}
	d(x_{n(k)+1},x_{n(k)+2}) &+ d(x_{n(k)+1},Tx^*) + d(x_{n(k)+2},Tx^*) \leq \\ &\leq \beta [d(x_{n(k)},x_{n(k)+1})+d(x^*,Tx^*)+d(x_{n(k)+1},x_{n(k)+2})],
	\end{aligned}
	\end{equation*}
	and taking the limit as $k \to \infty$ we get 
	$$2d(x^*,Tx^*) \leq \beta \cdot d(x^*,T^*),$$
	by where $d(x^*,Tx^*)=0$, so $x^*$ is a fixed point of $T$.
\end{proof}

\begin{example}
	Let $X=\{A,B,C,D\}$ and as in Figure \ref{Fig}:
	$$d(A,B)=d(A,C)=d(A,D)=4, \quad d(B,C)=d(C,D)=1, \quad d(B,D)=2,$$ and
	$T:X\to X$ defined as
	$$TA=TC=C, \quad TB=B, \quad TD=D.$$
	
	\begin{figure}[h!]
		\centering
		\includegraphics[width=6cm]{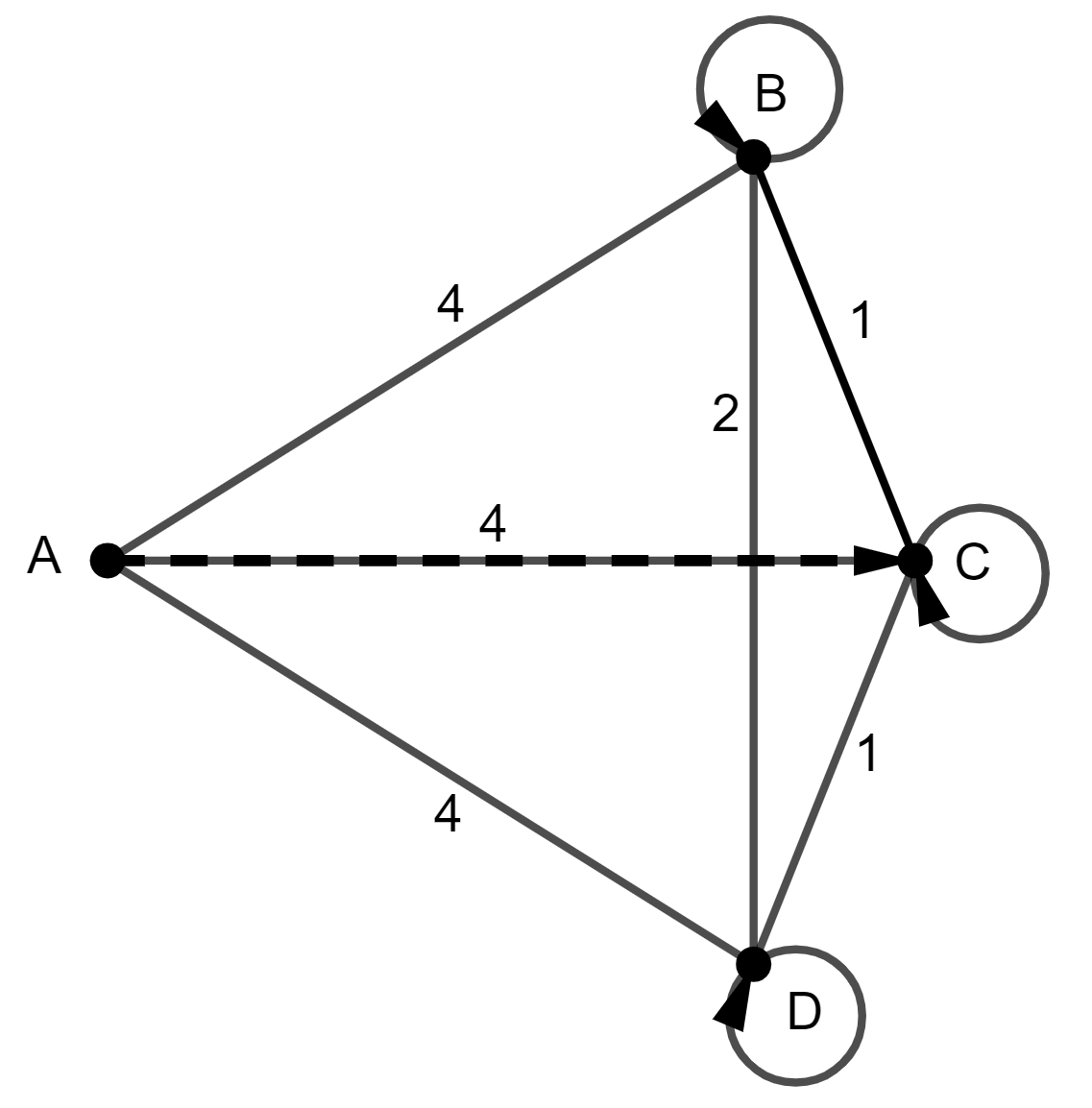}
		\caption{Example of generalized orbital triangular Kannan mapping}
		\label{Fig}
	\end{figure}
	
	Since $d(TB,TD) = 2$ and $d(B,TB)+d(D,TD) =0$, $T$ is not a Kannan contraction.
	
	Moreover, since 
	$$d(TA,TB)+d(TB,TD)+d(TD,TA)= 4 = d(A,TA)+d(B,TB)+d(D,TD),$$
	$T$ is not a generalized Kannan mapping (see \cite{Petrov-Kannan}).
	
	However, we have 
	$$d(TA,T^2A)+d(T^2A,TB)+d(TB,TA)= 2 \leq \dfrac12 \cdot 4 = \dfrac12[d(A,TA)+d(B,TB)+d(TA,T^2A)],$$
	$$d(TA,T^2A)+d(T^2A,TD)+d(TD,TA)= 2 \leq \dfrac12 \cdot 4 = \dfrac12[d(A,TA)+d(D,TD)+d(TA,T^2A)],$$
	so $T$ is a generalized orbital triangular Kannan contraction.
	
	Let us note that $T$ does not have periodic points of prime period $2$, and $T$ is not a Banach contraction either as $d(TB,TD) = 2 = d(B,D)$.
\end{example}

\section{Generalized orbital triangular Chatterjea contractions}

\noindent

In this section, we give the definition of generalized orbital triangular Chatterjea contractions and prove a fixed point theorem for such mappings.

\begin{definition}
	Let $(X,d)$ be a metric space. We shall say that $T:X\to X$ is a generalized orbital triangular Chatterjea contraction on $X$ if there exists $\gamma \in \left[0,\frac12\right)$ such that the inequality 
	\begin{equation}
	\begin{gathered}
	d(Tx,T^2x)+d(T^2x,Ty)+d(Ty,Tx) \leq \\ \leq \gamma [d(x,Ty)+d(y,Tx)+d(x,T^2x)+d(y,T^2x)+d(Tx,Ty)],
	\label{orbitalChatterjea}
	\end{gathered}
	\end{equation}
	holds for all $x,y \in X$, such that $x\neq y \neq Tx$.
\end{definition}

\begin{remark}
	Every Chatterjea contraction is a generalized orbital triangular Chatterjea contraction.
	
	Indeed, if $T$ is a Chatterjea contraction, then there exists $C \in [0,\frac12)$ such that for $x\neq y \neq Tx$ by (\ref{Chatterjea}) we have:
	\begin{equation}
	d(Tx,T^2x) \leq C [d(x,T^2x)+d(Tx,Tx)],
	\label{Chatterjea-xT}
	\end{equation}
	\begin{equation}
	d(T^2x,Ty) \leq C [d(Tx,Ty)+d(y,T^2x)].
	\label{Chatterjea-TT}
	\end{equation}
	\begin{equation}
	d(Ty,Tx) \leq C [d(x,Ty)+d(y,Tx)].
	\label{Chatterjea-xy}
	\end{equation}
	
	Adding inequalities (\ref{Chatterjea-xT}), (\ref{Chatterjea-TT}) and (\ref{Chatterjea-xy}) we obtain (\ref{orbitalChatterjea}), so the conclusion follows.
\end{remark}

\begin{theorem}
	Let $(X,d)$ be a complete metric space and let $T:X\to X$ be a generalized orbital triangular Chatterjea contraction on $X$ such that $T$ has no periodic points of prime period $2$. Then, $T$ has a unique fixed point.
	\label{TheoremChatterjea}
\end{theorem}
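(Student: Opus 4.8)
The plan is to follow the template of the proofs of Theorems~\ref{TheoremOrbital} and~\ref{TheoremKannan}: iterate $T$ from an arbitrary point, show that consecutive distances along the orbit decay geometrically, pass to the limit of the (Cauchy) orbit, and then treat uniqueness separately.

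First I fix $x_0\in X$ and set $x_{n+1}=Tx_n$. If some $x_N$ is a fixed point of $T$ there is nothing to prove, so I may assume $x_n\neq x_{n+1}$ for every $n$, i.e.\ $d_n:=d(x_n,x_{n+1})>0$. Since $x_n$ is not a fixed point and $T$ has no periodic point of prime period $2$, one has $x_{n+2}=T^2x_n\neq x_n$, and trivially $x_{n+1}\neq x_{n+2}$; hence $x_n,\ Tx_n=x_{n+1},\ x_{n+2}$ are pairwise distinct, so the pair $(x,y)=(x_n,T^2x_n)$ satisfies $x\neq y\neq Tx$ and is admissible in (\ref{orbitalChatterjea}).

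Substituting $x=x_n$, $y=x_{n+2}$ into (\ref{orbitalChatterjea}) and using $d(y,T^2x)=d(x_{n+2},x_{n+2})=0$, I obtain
\begin{equation*}
d_{n+1}+d_{n+2}+d(x_{n+1},x_{n+3})\ \leq\ \gamma\big[\,d(x_n,x_{n+3})+d_{n+1}+d(x_n,x_{n+2})+d(x_{n+1},x_{n+3})\,\big].
\end{equation*}
The one delicate point is to absorb the "long jump" $d(x_{n+1},x_{n+3})$: applying $d(x_n,x_{n+3})\leq d_n+d(x_{n+1},x_{n+3})$ on the right makes the total coefficient of $d(x_{n+1},x_{n+3})$ equal to $2\gamma$, so after moving it to the left its coefficient is $1-2\gamma\geq 0$ (here the hypothesis $\gamma<\tfrac12$ enters) and it may be discarded; bounding also $d(x_n,x_{n+2})\leq d_n+d_{n+1}$ yields
\begin{equation*}
d_{n+1}+d_{n+2}\ \leq\ 2\gamma\,(d_n+d_{n+1}).
\end{equation*}
Writing $s_n=d_n+d_{n+1}$ and $\lambda=2\gamma\in[0,1)$ this reads $s_{n+1}\leq\lambda s_n$, hence $s_n\leq\lambda^n s_0$ and in particular $d_n\leq\lambda^n s_0$; telescoping with the triangle inequality, $d(x_n,x_{n+m})\leq\sum_{k\geq n}\lambda^k s_0=\lambda^n s_0/(1-\lambda)\to 0$, so $\{x_n\}$ is Cauchy and, by completeness, $x_n\to x^*$ for some $x^*\in X$. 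I expect this geometric estimate — extracting a contraction ratio $<1$ out of the five-term right-hand side of (\ref{orbitalChatterjea}) — to be the main obstacle; everything else is routine.

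Finally I identify $x^*$ and prove uniqueness exactly as in Theorems~\ref{TheoremOrbital}--\ref{TheoremKannan}. As there, one finds a subsequence with $x_{n(k)}\neq x^*\neq Tx_{n(k)}$ (if not, $x^*$ appears infinitely often in the orbit, which already forces $Tx^*=x^*$); plugging $x=x_{n(k)}$, $y=x^*$ into (\ref{orbitalChatterjea}) and letting $k\to\infty$, while using $x_{n(k)},x_{n(k)+1},x_{n(k)+2}\to x^*$, the right-hand side tends to $2\gamma\,d(x^*,Tx^*)$ and the left-hand side to $2\,d(x^*,Tx^*)$, whence $d(x^*,Tx^*)=0$. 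For uniqueness, suppose $y^*=Ty^*\neq x^*$; since $Tx^*=x^*$, the pair $(x,y)=(x^*,y^*)$ satisfies $x\neq y\neq Tx$, and (\ref{orbitalChatterjea}) gives $2\,d(x^*,y^*)\leq 4\gamma\,d(x^*,y^*)$, impossible because $\gamma<\tfrac12$. Hence $T$ has a unique fixed point.
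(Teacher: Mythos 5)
Your proof is correct and follows essentially the same route as the paper: the same substitution $y=T^2x$ along the Picard orbit, the same subsequence-and-limit argument identifying $x^*$ as a fixed point, and the identical uniqueness computation $2d(x^*,y^*)\le 4\gamma\, d(x^*,y^*)$. The only (harmless) difference is the bookkeeping of the geometric decay: the paper keeps the full perimeter $p_i$ on the left and telescopes $p_i\le\gamma(p_{i-1}+p_i)$, i.e.\ ratio $\gamma/(1-\gamma)<1$, whereas you absorb the term $d(x_{n+1},x_{n+3})$ into the left-hand side using $\gamma<\tfrac12$ and telescope $s_n=d_n+d_{n+1}$ with ratio $2\gamma<1$.
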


\begin{proof}
	Let $x_i$ and the Picard iteration as in the proof of Theorem \ref{TheoremOrbital} and $$p_i = d(x_i,x_{i+1})+d(x_{i+1},x_{i+2})+d(x_{i+2},x_{i}),$$ for every $i \geq 0$. Then, by (\ref{orbitalChatterjea}), we have
	\begin{equation*}
	\begin{gathered}
	p_i = d(x_i,x_{i+1})+d(x_{i+1},x_{i+2})+d(x_{i+2},x_{i}) =\\
	=d(Tx_{i-1},T^2x_{i-1})+d(T^2x_{i-1},Tx_{i+1})+d(Tx_{i+1},Tx_{i-1}) \leq \\
	\leq \gamma[d(x_{i-1},Tx_{i+1})+d(x_{i+1},Tx_{i-1})+d(x_{i-1},T^2x_{i-1})+\\+d(x_{i+1}, T^2x_{i-1})+d(Tx_{i-1},Tx_{i+1})] =\\
	=\gamma[d(x_{i-1},x_{i+2})+d(x_{i+1},x_{i})+d(x_{i-1},x_{i+1})+d(x_i,x_{i+2})] \leq \\
	\leq \gamma[d(x_{i-1},x_{i})+d(x_{i},x_{i+1})+d(x_{i+1},x_{i-1}) +\\+ d(x_i,x_{i+1})+d(x_{i+1},x_{i+2})+d(x_{i+2},x_i)]=\\=
	\gamma\cdot(p_{i-1}+p_i),
	\end{gathered}
	\end{equation*}
	for every $i \geq 1$, so we obtain $$p_i \leq \dfrac{\gamma}{1-\gamma} p_{i-1}$$ for every $i \geq 1$. Thus, since $\dfrac{\gamma}{1-\gamma} < 1$, as in the proof of Theorem \ref{TheoremOrbital} we obtain that $\{x_n\}$ is a Cauchy sequence, so it is a convergent sequence to an $x^* \in X$. 
	
	To prove that $x^*$ is a fixed point of $T$, notice that since $\{x_n\}$ is convergent and $x_n \neq x_{n+1} \neq x_{n+2}$, there exists a subsequence $\{x_{n(k)}\}$ such that $x_{n(k)} \neq Tx_{n(k)} \neq x^*$, so, by (\ref{orbitalChatterjea}) we obtain 
	\begin{equation*}
	\begin{gathered}
	d(Tx_{n(k)},T^2x_{n(k)}) + d(T^2x_{n(k)},Tx^*) + d(Tx^*,Tx_{n(k)}) \leq \\  \leq \gamma [d(x_{n(k)},Tx^*)+d(x^*,Tx_{n(k)})+d(x_{n(k)},T^2x_{n(k)})+\\+d(x^*,T^2x_{n(k)})+d(Tx_{n(k)},Tx^*)],
	\end{gathered}
	\end{equation*}
	so
	\begin{equation*}
	\begin{gathered}
	d(x_{n(k)+1},x_{n(k)+2}) + d(x_{n(k)+1},Tx^*) + d(x_{n(k)+2},Tx^*) \leq \\ \leq \gamma [d(x_{n(k)},Tx^*)+d(x^*,x_{n(k+1)})+d(x_{n(k)},x_{n(k+2)})+\\+d(x^*,x_{n(k+2)})+d(x_{n(k)+1},Tx^*)],
	\end{gathered}
	\end{equation*}
	and taking the limit as $k \to \infty$ we get 
	$$2d(x^*,Tx^*) \leq 2\gamma d(x^*,T^*),$$
	by where $d(x^*,Tx^*)=0$, so $x^*$ is a fixed point of $T$.
	
	Let us suppose that there exists another fixed point of $T$, $y^* \in X$ for which $Ty^*=y^*\neq x^* \neq Tx^*$. Then, by (\ref{orbitalChatterjea}) we get 
	\begin{equation*}
	\begin{gathered}
	d(Tx^*,T^2x^*)+d(T^2x^*,Ty^*)+d(Ty^*,Tx^*) \leq \\ \leq \gamma [d(x^*,Ty^*)+d(y^*,Tx^*)+d(x^*,T^2x^*)+d(y^*,T^2x^*)+d(Tx^*,Ty^*)],
	\end{gathered}
	\end{equation*}
	so $$2d(x^*,y^*) \leq 4\gamma d(x^*,y^*),$$ which is a contradiction since $\gamma < \dfrac12$.
\end{proof}

\begin{example}
	Let $X=\{0,1,2,3\}$ endowed with the distance $d(x,y) =|x-y|$ and let $T:X\to X$ defined as $$T0=T1=T2=0, \quad T3=2.$$
	Let us first note that $T$ has a unique fixed point $0=T0$ and $T$ does not possess any periodic points of prime period $2$ since
	$$T^21=T^22=T^23=0.$$
	
	Moreover, $T$ is not a Chatterjea contraction since we have 
	$$d(T2,T3) = |0-2|=2$$ and $$d(2,T3)+d(3,T2) = |2-2|+|3-0|=3.$$
	
	However, for every $x,y\in X$ such that $x\neq y\neq Tx$, letting $$L(x,y) := d(Tx,T^2x)+d(T^2x,Ty)+d(Ty,Tx)$$ and $$R(x,y):=d(x,Ty)+d(y,Tx)+d(x,T^2x)+d(y,T^2x)+d(Tx,Ty)$$ we have 
	$$L(0,1)=L(0,2)=L(1,2)=L(2,1) = 0,$$
	$$L(0,3) = L(3,0) = L(1,3)=L(3,1) = L(2,3)= 4$$
	and 
	$$R(0,1)= 3, \quad R(0,2)= 4, \quad R(1,2)=R(2,1) = 6,$$
	$$R(0,3) = R(3,0) = R(1,3)=R(3,1) = R(2,3) =10. $$
	
	Thus, $T$ is a generalized orbital triangular Chatterjea contraction with $\gamma=\dfrac25$.
\end{example}


\begin{thebibliography}{9}
	
	\bibitem{Banach}
	S. Banach, Sur les operations dans les ensembles abstraits et leur application aux equations integrales, Fundamenta Mathematicae 3 (1922), 133-181.
	
	\bibitem{Chatterjea}
	S. K. Chatterjea, Fixed-point theorems, C.R. Acad. Bulgare Sci. 25(1972), 727?730.
	
	\bibitem{Kannan}
	R. Kannan, Some results on fixed point - II
	Amer. Math. Monthly, 76 (1969), pp. 405-408
	
	\bibitem{Petrov}
	E. Petrov, Fixed point theorem for mappings contracting perimeters of triangles.
	J. Fixed Point Theory Appl. 25, 1-11 (2023). (paper no. 74)
	
	\bibitem{Petrov-pairwise}
	Petrov, E.: Periodic points of mappings contracting total pairwise distance, arXiv:2402.02536. 
	
	\bibitem{Petrov-Kannan}
	E. Petrov, R. K. Bisht, Fixed point theorem for generalized Kannan type mappings, 	arXiv:2308.05419.
	
	\bibitem{Petrov-Chat}
	Bisht, R.; Petrov, E.: A three point extension of Chatterjea?s fixed point theorem with at most two fixed points, arXiv:2403.07906.
	
	\bibitem{Popescu-Chat}
	Popescu, O.; Pacurar, C.: Fixed point theorem for generalized Chatterjea type mappings. arXiv:2404.00782. 
	
	\bibitem{Popescu-phi}
	Popescu, O.; Pacurar, C.: Mappings contracting triangles. arXiv:2403.19488
	
	\bibitem{Subrahmanyam}
	P. V. Subrahmanyam, Completeness and fixed points, Monatsh. Math., 80(1975) 325-330.
	
	
\end{thebibliography}

	\medskip
	\vspace{1.2ex}
	
\end{document}